\documentclass[11pt]{amsart}
\usepackage[a4paper]{geometry}                                              
\geometry{centering}                                                        
\hyphenpenalty=1000\tolerance=1000                                          
\usepackage{fourier,mathtools}                                                       
\usepackage[bb=ams, cal=cm, scr=boondox, frak=euler]{mathalpha} 
\let\amsmathbb\mathbb           
\AtBeginDocument{%
    \let\mathbb\relax
    \newcommand{\mathbb}[1]{\amsmathbb{#1}}
}
\mathtoolsset{mathic=true}

\usepackage{amsmath,amsthm,amsfonts,amssymb,amscd}                               
\usepackage{mathrsfs}                                                       
\usepackage{esint}                                                          
\usepackage[all]{xy}                                                      
\usepackage[colorlinks=true,linkcolor=magenta,citecolor=blue]{hyperref}    
\usepackage[capitalise]{cleveref}                                          
\usepackage{colonequals}                                                   
\newcommand{\bP}{{\mathbb{P}}}
\newcommand{\bQ}{{\mathbb{Q}}}
\newcommand{\bZ}{{\mathbb{Z}}}
\newcommand{\cP}{{\mathcal{P}}}
\newcommand{\cU}{{\mathcal{U}}}
\newcommand{\et}{{\mathrm{\acute{e}t}}}
\newcommand{\sA}{{\mathscr{A}}}

\newcommand{\sF}{{\mathscr{F}}}
\newcommand{\sO}{{\mathscr{O}}}

\renewcommand{\phi}{\varphi}

\DeclareMathOperator{\CH}{CH}
\DeclareMathOperator{\Char}{char}

\DeclareMathOperator{\LN}{LN}

\DeclareMathOperator{\Tr}{Tr}
\DeclareMathOperator{\Ker}{Ker}

\newtheorem{thm}{Theorem}[section]           

\newtheorem{lem}[thm]{Lemma}

\theoremstyle{definition}

\usepackage{enumitem}

\setcounter{tocdepth}{1}

\begin{document}
\title{A specialisation theorem for Lang-N\'eron groups}                                                                              
\begin{abstract}
    We show that, for a polarised smooth projective variety $B \hookrightarrow \bP^n_k$ of dimension $\geq 2$ over an infinite field $k$ and an abelian variety $A$ over the function field of $B$, there exists a dense Zariski open set of smooth geometrically connected hyperplane sections $h$ of $B$ such that $A$ has good reduction at $h$ and the specialisation homomorphism of Lang-N\'eron groups at $h$ is injective (up to a finite $p$-group in positive characteristic $p$). This gives a positive answer to a conjecture of the first author, which is used to deduce a negative definiteness result on his refined height pairing. This also sheds a new light on N\'eron's specialisation theorem.
\end{abstract}

\author{Bruno Kahn}
\address{Sorbonne Universit\'e and Universit\'e Paris Cit\'e, CNRS, IMJ-PRG, F-75005 Paris, France.}
\email{bruno.kahn@imj-prg.fr}

\author{Long Liu}
\address{Sorbonne Universit\'e and Universit\'e Paris Cit\'e, CNRS, IMJ-PRG, F-75005 Paris, France.}
\email{long.liu@imj-prg.fr}  
\date{October 8, 2024}
\keywords{Abelian varieties, $K/k$-trace, Lang-Néron theorem}
\subjclass[2020]{11G99, 14K99}
\maketitle  

\tableofcontents

\section{Introduction}
Let $K/k$ be a finitely generated regular extension of fields of exponential characteristic $p$, and let $A$ be an abelian variety over $K$. Then $A$ has a $K/k$-trace $C=\Tr_{K/k} A$: this operation is right adjoint to the extension of scalars of abelian varieties. A celebrated theorem of Lang and Néron says that the \emph{Lang-Néron group}
\[\LN(K/k,A)=A(K)/C(k)\]
is finitely generated (\cite{LangNeron59LNfg}, {see also} \cite{Conrad06ChowLN} {and \cite{Kahn09LangNeronShiodaTate}}).

Let $B$ be a smooth model of $K/k$, and let $h\in B$ be a point of codimension $1$ whose residue field $E$ is also regular over $k$. If $A$ has good reduction at $h$, write $A_h$ for its reduction and $C_h=\Tr_{E/k}(A_h)$.
 Then we have a $k$-morphism 
\begin{align}\label{spTr} 
    \varphi_0 \colon C \longrightarrow C_h
\end{align}
and a commutative diagram of specialisation maps with exact rows:
\begin{equation}\label{sp}
    \begin{gathered}
        \xymatrix{
            0 \ar[r] & C(k)   \ar[r] \ar[d]^{\varphi_0(k)} & A(K)   \ar[r] \ar[d]^{\varphi} & \LN(A,K/k)   \ar[r] \ar[d]^{\psi} &0\\
            0 \ar[r] & C_h(k) \ar[r]                       & A_h(E) \ar[r]                  & \LN(A_h,E/k) \ar[r]               &0
        }
    \end{gathered}
\end{equation}
see  \cite[\S6B]{Kahn24RefinedHeight}. Recall their construction: the stalk $R \colonequals \sO_{B,h}$ is a discrete valuation ring with field of fractions $K$ and, by hypothesis, $A$ has a smooth proper model $\sA$ over $R$. Then $\sA$ is automatically an abelian scheme and is the N\'eron model of $A$ by \cite[\S 1.4, Proposition~2]{BLR90Neron}. Then $A_h$ is the special fibre of $\sA$, which is an abelian variety over $E$; this yields $\varphi$. The co-unit $C_K \to A$ extends to a morphism $C_{R} \to \sA$, and induces a morphism $C_E \to A_h$, thus the existence of $\varphi_0$ follows from the universal property of $C_h$.

\begin{thm}\label{spLNthm}
Assume that $B$ is smooth projective of dimension $d \geq 2$. For any projective embedding $B \hookrightarrow \bP^n_k$, there exists a dense open subset \(\cU\) of the dual projective space \(\cP\) of \(\bP^n_k\) such that if $H$ lies in \(\cU(k)\), then 
    \begin{enumerate}[label={\rm(\alph*)}]
        \item the hyperplane section $h \colonequals H \cap B$ is smooth geometrically connected of dimension $d-1$, 
        \item $A$ has good reduction at $h$,
        \item  in Diagram \eqref{sp}, the kernels of all vertical maps are finite $p$-groups (hence these maps are injective in characteristic $0$),
        \item the map $\varphi_0$ of \eqref{spTr} is a $p$-isogeny (an isomorphism in characteristic $0$).
    \end{enumerate}
\end{thm}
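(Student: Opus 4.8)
\medskip
\noindent\emph{Proof proposal.} The plan is to take $\cU$ to be the intersection of four dense open subsets of $\cP$, one for each requirement. For \emph{(a)}: since $k$ is infinite and $B$ is smooth, projective and geometrically connected of dimension $d\geq 2$, Bertini's smoothness and connectedness theorems give a dense open $\cU_1$ over which $h=H\cap B$ is smooth, geometrically connected of dimension $d-1$. For \emph{(b)}: spreading out, $A$ extends to an abelian scheme $\sA$ over a dense open $U\subseteq B$; put $Z=B\setminus U$. For $H$ outside a proper closed subset of $\cP$ the section $h$ is irreducible of dimension $d-1$ and is not one of the finitely many codimension‑one components of $Z$, so its generic point lies in $U$; this gives a dense open $\cU_2$ over which $A$ has good reduction at $h$ and $\sA$ restricts to an abelian scheme over the dense open $h\cap U$ of $h$, with generic fibre $A_h$. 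From now on I work over $\cU'\colonequals\cU_1\cap\cU_2$ and use the identifications $A(K)=\sA(\sO_{B,h})$ (valuative criterion) and $A_h(E)=\sA(h\cap U)$ (extension of rational sections of abelian schemes over smooth bases); then $\varphi$ is reduction modulo the maximal ideal of $\sO_{B,h}$.

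\emph{The rank part, via heights.} I would invoke the geometric canonical height $\hat h_B$ on $\LN(A,K/k)$ attached to $\sO_B(1)$: it is positive definite on $\LN(A,K/k)\otimes\bQ$, and, by a moving lemma, it is computed by restricting a point to a general complete‑intersection curve $C\subseteq B$ (cut out by $d-1$ members of $|\sO_B(1)|$) and applying the classical function‑field height $\hat h_C$ for abelian varieties over $k(C)$. Iterating Bertini, a general flag $C\subseteq h\subseteq B$ has $C$ a general complete‑intersection curve in $h$ as well as in $B$; so over a dense open $\cU_3$ one gets the identity $\hat h_h\bigl(\psi(x)\bigr)=\hat h_C\bigl(x|_C\bigr)=\hat h_B(x)$ for all $x\in\LN(A,K/k)$. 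Hence for $H\in(\cU'\cap\cU_3)(k)$, $\psi(x)=0$ forces $\hat h_B(x)=0$, i.e.\ $x$ torsion; so $\Ker\psi\subseteq\LN(A,K/k)_{\mathrm{tors}}$, a \emph{fixed} finite group, and $\psi\otimes\bQ$ is injective.

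\emph{Torsion, the trace, and the assembly.} Three more inputs finish the proof. (i) For good reduction $\varphi$ is injective on prime‑to‑$p$ torsion of $A(K)=\sA(\sO_{B,h})$: over the completed local model the kernel of reduction is the points of a formal group, which has no prime‑to‑$p$ torsion in equicharacteristic $p$. (ii) Being constant, $T=\Tr_{K/k}A$ extends over $U$; restricting the trace homomorphism $T_U\to\sA$ to $h$ and using the universal property of $\Tr_{E/k}A_h$ gives a map $u\colon T\to T'\colonequals\Tr_{E/k}A_h$ whose kernel lies in the restriction to $h$ of the (infinitesimal) kernel of the trace map $T_K\to A$, so $u$ is injective up to a finite $p$‑group. (iii) For generic $H$, $u$ is an isomorphism (up to a finite $p$‑group in characteristic $p$): the universal hyperplane section lives over the generic point of $\cP$, the incidence variety having function field $k(I)$ with $k(I)/K$ purely transcendental, so $A_{\overline h}=A_{k(I)}$, and since the $K/k$‑trace is unchanged under the regular base‑field extensions $k\rightsquigarrow k(\cP)$ and $K\rightsquigarrow k(I)$ one has $\Tr_{k(I)/k(\cP)}(A_{\overline h})=T_{k(\cP)}$; thus $\Tr_{k(h)/k}A_h$ has dimension $\dim T$ at the generic point of $\cP$, and as the trace can only jump up under specialisation the locus where it is larger is not dense and is avoided by a dense open $\cU_4$ (shrunk, if needed, to where $u$ is an isomorphism). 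Take $\cU\colonequals\cU'\cap\cU_3\cap\cU_4$: over $\cU$, feeding the exact rows $0\to T(k)\to A(K)\to\LN(A,K/k)\to0$ and its analogue for $A_h$ into the snake lemma and using (i)–(iii) shows that $\Ker\varphi$ and $\Ker\psi$ are finite $p$‑groups and $\coker\varphi\to\coker\psi$ is surjective with finite $p$‑group kernel, which is (c).

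\emph{The main obstacle.} The crux is step (iii): showing that the $K/k$‑trace does not grow on a dense subset of $\cP$, and controlling the discrepancy between $T(k)$ and $T'(k)$ — hence between $\coker\varphi$ and $\coker\psi$ — finely enough to see it is a \emph{finite} $p$‑group, which forces one to work with the purely inseparable isogeny $T\to T'$ and to track $p$‑parts throughout the snake lemma. The moving lemma identifying $\hat h_B$ with $\hat h_h\circ\psi$ for generic $H$ is the other technical input, but is essentially formal once the geometric height over a higher‑dimensional base is in place. It is precisely the positivity of $\dim h$ that upgrades ``injective off a thin set'' (as in N\'eron's specialisation theorem) to ``injective on a dense open'': restricting to a general curve \emph{inside} $h$ loses no height.
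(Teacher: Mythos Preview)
Your overall architecture and the snake-lemma assembly are correct, but the route differs substantially from the paper's, and step (iii) contains a genuine gap.

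The paper does not use heights at all. For the rank part it applies Deligne's weak Lefschetz theorem to the lisse sheaf $T_\ell\sA$ on $U$: for $H$ in a dense open of $\cP$ the restriction $H^1(U_s,T_\ell\sA)\to H^1((U\cap h)_s,T_\ell\sA)$ has finite kernel (this is where $d\ge 2$ enters), and combining with the Kummer embedding $\LN(A,Kk_s/k_s)\otimes\bZ_\ell\hookrightarrow H^1(U_s,T_\ell\sA)$ gives $\Ker\psi$ finite. For the trace it applies the \emph{same} theorem in degree~$0$ to the quotient sheaf $\sB(\ell)=\coker(T_\ell T_U\to T_\ell\sA)$: since $\bar A=A/T_K$ has trivial $K/k$-trace, $\bar A(Kk_s)$ is finitely generated, so $H^0(U_s,\sB(\ell))=T_\ell\bar A(Kk_s)=0$; weak Lefschetz then forces $H^0((U\cap h)_s,i^*\sB(\ell))=0$, and as this group contains the constant group $\coker(T_\ell T\to T_\ell T_h)$, the map $T\to T_h$ is an isogeny. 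Your height identity $\hat h_h\circ\psi=\hat h_B$ is essentially the projection formula for $c_1(\sO_B(1))^{d-1}=i_*c_1(\sO_h(1))^{d-2}$ and, with care at the bad-reduction locus, could give a legitimate alternative to the $H^1$ step; it buys a more arithmetic argument at the cost of setting up N\'eron--Tate heights over a higher-dimensional base.

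The gap is in (iii), which you yourself flag as the crux. You establish that $u_H\colon T\to T_H$ has finite kernel, so $\dim T_H\ge\dim T$ for every $H$, and that equality holds at the generic point of $\cP$. But the sentence ``as the trace can only jump up under specialisation the locus where it is larger is not dense'' is a non sequitur: to pass from ``equality at the generic point'' to ``equality on a dense open'' you need $H\mapsto\dim T_H$ to be constructible (or upper semicontinuous), and nothing you have written gives this. The assignment $T_H=\Tr_{E_H/k}A_H$ is not the fibre of a fixed family over $\cP$ --- both the abelian variety and the ground field $E_H=k(h)$ vary with $H$ --- so no standard semicontinuity theorem applies. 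The paper's degree-$0$ Lefschetz argument bypasses this entirely: it proves $T\to T_h$ is an isogeny \emph{for each} $H$ in the good open, with no comparison between different $H$'s and hence no constructibility needed.
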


(If $k$ is infinite, so is \(\cU(k)\). When $k$ is finite, \(\cU(k)\) may be empty because in general there are no smooth hyperplane sections in $B$ defined over $k$; this issue can presumably be solved by composing the given projective embedding with a suitable Veronese embedding (see \cite[Corollary 1.6]{Gabber01SpaceFillingCurvesAlb} and \cite[Theorem~3.1]{Poonen04BertiniFq}).)

Besides Bertini's theorem, our main tool is a form of the weak Lefschetz theorem due to Deligne \cite[A.5]{Katz93AffinePerveMonodromy}, which renders the proof almost trivial.

The first application is to a negative definiteness result for the height pairing introduced in  \cite{Kahn24RefinedHeight}. For a smooth projective variety $X$ of dimension $d$ over $K$ and $i\in [0,d]$, the first author defined a subgroup $\CH^i(X)^{(0)}$  of the $i$-th Chow group of $X$ and a pairing
\[ \CH^i(X)^{(0)} \times \CH^{d+1-i}(X)^{(0)} \to \CH^1(B)\otimes \bQ. \]
For $i=1$, this  pairing induces a quadratic form on the Lang-N\'eron group of the Picard variety of $X$.  In \cite[Theorem~6.6]{Kahn24RefinedHeight}, it is proven that this quadratic form is negative definite if $B$ is a curve, and that one can reduce to this case when $\dim B >1$ if $\psi$ has finite kernel in \eqref{sp} for a suitable $h$  \cite[Conjecture~6.3]{Kahn24RefinedHeight}. Thus Theorem \ref{spLNthm} proves this conjecture\footnote{At least for $k$ infinite, but this is sufficient for the application: see \cite[part (d) of the proof of Theorem 6.6]{Kahn24RefinedHeight}.} (in a stronger form, and without the hypothesis of semi-stable reduction appearing in loc. cit.).

The second application is to N\'eron's specialisation theorem. Assume that $k$ is a number field.  If $B=\bP^n_k$ and $U$ is an open subset of $B$ over which $A$ extends to an abelian scheme $\sA$, then the set of rational points $t\in U(k)$ such that the specialisation map $A(K)\to \sA_t(k)$ is not injective is thin (\cite[11.1, theorem]{Serre97MordellWeil},  see \cite{CT20MW} for generalisations). The injectivity of $\varphi$ in Theorem \ref{spLNthm}  gives a version of this specialisation result which does not involve Hilbert's irreducibility theorem, but of course requires $\dim B>1$; see however Remark \ref{rmk}.

\section{Auxiliary results}
We start with the following standard lemmas.

\begin{lem}\label{ExtendRatPt}
    Let $U$ be an integral normal noetherian scheme with function field $K$. Let $\sA$ be an abelian scheme over $U$  with generic fibre $A$. Then the pull-back map
    \[ \sA(U) \to A(K) \]
    is an isomorphism.
\end{lem}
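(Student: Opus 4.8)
The statement is purely formal once one unwinds the definitions, so the proof should be short. The map $\sA(U) \to A(K)$ is simply restriction of sections along the dense open immersion $\Spec K \hookrightarrow U$; I want to show it is bijective. Injectivity is the easy half: two sections $U \to \sA$ agreeing on the schematically dense open $\Spec K$ agree on all of $U$ because $\sA$ is separated over $U$ (being proper), so their equaliser is a closed subscheme containing a dense open, hence all of $U$. (Equivalently, $U$ is integral, hence reduced, so a morphism out of $U$ is determined by its restriction to any nonempty open.)

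\textbf{Surjectivity.} Given a $K$-point $s_K \colon \Spec K \to A \hookrightarrow \sA$, I must extend it to a section $U \to \sA$. The standard route is to combine the valuative criterion with a codimension-one/normality argument, exactly as in the references \cite[Proposition 1.3]{Artin86NeronModels} and \cite[\S 4.4, Theorem 1]{BLR90Neron}. First, spread out: $s_K$ extends to a section over some dense open $V \subseteq U$, and let $Z = U \setminus V$ with its reduced structure. If $Z$ has codimension $\geq 2$ in the normal scheme $U$, then Weil's extension theorem (the version for maps from a normal scheme to a group scheme, which says a rational map defined away from codimension $\geq 2$ extends) finishes the job, possibly after also handling the codimension-$1$ part. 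The cleaner organisation: it suffices to extend $s_K$ over every local ring $\mathcal{O}_{U,\eta}$ at a codimension-$1$ point $\eta$, because then the section is defined on an open whose complement has codimension $\geq 2$, and Weil extension handles the rest; and $\mathcal{O}_{U,\eta}$ is a discrete valuation ring (as $U$ is normal), so the valuative criterion of properness applied to the proper morphism $\sA \to U$ produces the required extension from $\Spec K$ to $\Spec \mathcal{O}_{U,\eta}$. Gluing these local extensions with the one on $V$ over the overlaps (using separatedness of $\sA/U$ for uniqueness, i.e. injectivity already proved) yields a section over a big open, and Weil's theorem completes it to a section over $U$.

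\textbf{Main obstacle.} There is essentially no obstacle, which is why the lemma is stated with a \qed and no proof: every ingredient — separatedness for injectivity, the valuative criterion for DVRs, and Weil's extension theorem across codimension $\geq 2$ — is standard, and I am explicitly permitted to cite \cite[Proposition 1.3]{Artin86NeronModels} or \cite[\S 4.4, Theorem 1]{BLR90Neron}, which prove precisely this. The only point requiring a moment's care is that Weil's extension theorem needs the target to be a \emph{group} scheme (smooth over the base), not just any scheme — which is exactly why the abelian-scheme hypothesis, rather than mere properness, is used here; properness alone gives the valuative-criterion step but not the codimension-$\geq 2$ step. So in a written-out proof I would simply note "this is the valuative criterion of properness together with Weil's extension theorem; see the cited references," which is what the paper does.
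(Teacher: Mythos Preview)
Your proposal is correct and follows exactly the approach the paper indicates: injectivity from separatedness over a reduced base, surjectivity by spreading out, extending across codimension-$1$ points via the valuative criterion (using that $\sO_{U,x}$ is a DVR since $U$ is normal), and then invoking Weil's extension theorem to fill in codimension $\ge 2$. This matches both the one-line justification the paper gives before the \qed\ and the more detailed argument the authors had in mind.
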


\begin{proof}
This is a consequence of the valuative criterion of properness and Weil's extension theorem (\cite[Proposition 1.3]{Artin86NeronModels} or \cite[\S 4.4, Theorem 1]{BLR90Neron}).
\end{proof}

\begin{lem}
    Let $U$ be a scheme and let $\sA$ be an abelian scheme over $U$. If $n$ is invertible on $U$, i.e., $n$ is prime to $\Char(k(x))$ for all $x\in U$, then we have an injection
    \[ \sA(U)\big/n \hookrightarrow H^1_\et(U,{}_n\sA),\]
    where ${}_n\sA$ is the kernel of the multiplication by $n$ on $\sA$.
\end{lem}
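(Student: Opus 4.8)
The plan is to deduce this from the Kummer exact sequence attached to the multiplication-by-$n$ endomorphism $[n]\colon \sA \to \sA$, whose scheme-theoretic kernel is by definition ${}_n\sA$. The decisive point is that, because $n$ is invertible on $U$, the morphism $[n]$ is \emph{étale} and surjective; granting this, $[n]$ is an epimorphism of abelian sheaves on the small étale site $U_\et$, and one obtains a short exact sequence of étale sheaves
\[ 0 \longrightarrow {}_n\sA \longrightarrow \sA \xrightarrow{\;[n]\;} \sA \longrightarrow 0. \]

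First I would justify the relevant properties of $[n]$. It is a standard fact about abelian schemes that $[n]$ is finite locally free of degree $n^{2\dim(\sA/U)}$ (one checks on fibres that it is proper and quasi-finite, hence finite, realising $\sA$ as the fppf quotient $\sA/{}_n\sA$), so in particular it is faithfully flat. When $n\in\sO_U^\times$ it is moreover unramified: its differential along the identity section — hence, by translation, everywhere — is multiplication by $n$ on $\operatorname{Lie}(\sA/U)$, which is an isomorphism exactly because $n$ is prime to $\Char k(x)$ for every $x\in U$. Flat and unramified means étale, and an étale surjection is an epimorphism for the étale topology, with kernel sheaf ${}_n\sA$; this gives the displayed short exact sequence. (Had $n$ not been invertible, $[n]$ would still be faithfully flat but no longer étale, and one would only land in $H^1_{\mathrm{fppf}}(U,{}_n\sA)$ — so this is precisely where the hypothesis enters.)

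Finally I would take the long exact sequence of étale cohomology of $U$ associated with the above short exact sequence:
\[ \cdots \longrightarrow \sA(U) \xrightarrow{\;[n]\;} \sA(U) \longrightarrow H^1_\et(U,{}_n\sA) \longrightarrow H^1_\et(U,\sA) \longrightarrow \cdots \]
Exactness at $H^1_\et(U,{}_n\sA)$ says that the image of $\sA(U)\xrightarrow{[n]}\sA(U)$ equals the kernel of the connecting map, so the latter factors through an injection $\sA(U)/n \inj H^1_\et(U,{}_n\sA)$, which is the assertion. The only step that genuinely requires care is the étaleness of $[n]$ together with the resulting exactness of the Kummer sequence \emph{for the étale topology}; everything else is the formal long exact cohomology sequence, so I do not expect a serious obstacle.
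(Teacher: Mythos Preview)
Your argument is correct and is exactly the approach the paper takes: its proof is the one-line instruction ``use the short exact sequence of \'etale sheaves $0 \to {}_n\sA \to \sA \xrightarrow{n} \sA \to 0$,'' and you have simply spelled out why $[n]$ is \'etale (hence an epimorphism on $U_\et$) and then taken the long exact sequence. One tiny wording slip: the relevant exactness is at the second $\sA(U)$, not at $H^1_\et(U,{}_n\sA)$, but your conclusion is the right one.
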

\begin{proof}
    Use the short exact sequence of \'etale sheaves
    \[ 0 \to {}_n \sA \to \sA \stackrel{n}{\to} \sA \to 0. \qedhere\]
\end{proof}

By the above lemmas, we can use cohomology to study the specialisation of $A(K)$. We shall rely on the following version of the weak Lefschetz theorem.

\begin{thm}[Deligne; see {\cite[Corollary A.5]{Katz93AffinePerveMonodromy}}]\label{DeligneWeakLef}
    Let $k$ be a separably closed field and let $\ell \neq \Char(k)$ be a prime. Let $f \colon U \to \bP^n_k$ be a separated quasi-finite morphism and let $\sF$ be a lisse $\overline{\bQ_\ell}$-sheaf on $U$. Assume that $U$ is smooth over $k$ and is of pure dimension $d$. Then there exists a dense open subset \(\cU\) of the dual projective space \(\cP\) of \(\bP^n_k\) such that if $H$ lies in \(\cU\), then the restriction map
    \[ H^i(U,\sF) \longrightarrow H^i(f^{-1}(H), \sF|_{f^{-1}(H)} )\]
    is an isomorphism for $i< d-1$ and injective for $i=d-1$.
\end{thm}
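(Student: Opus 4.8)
The plan is to push $\sF$ forward to $\bP^n_k$ and, via the recollement triangle attached to a hyperplane, reduce the statement to the compact-support form of Artin's affine Lefschetz theorem on $\bP^n_k\setminus H\cong\mathbb{A}^n_k$. We may assume $k$ algebraically closed. As $f$ is separated, quasi-finite and of finite type, Zariski's Main Theorem furnishes a dense open immersion $\alpha\colon U\inj\overline U$ and a finite morphism $\pi\colon\overline U\to\bP^n_k$ with $\pi\circ\alpha=f$. Put $\mathscr{K}\colonequals R\pi_*R\alpha_*\sF\in D^b_c(\bP^n_k,\overline{\bQ_\ell})$; since $\pi$ is finite, $H^i(U,\sF)=H^i(\bP^n_k,\mathscr{K})$ for all $i$. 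Given a hyperplane $H$, write $\mathbb{A}^n=\bP^n_k\setminus H$ and let $j\colon\mathbb{A}^n\inj\bP^n_k$, $h\colon H\inj\bP^n_k$ be the complementary immersions. Applying $R\Gamma(\bP^n_k,-)$ to the triangle $j_!j^*\mathscr{K}\to\mathscr{K}\to h_*h^*\mathscr{K}\by{+1}$ and using that $\bP^n_k$ is proper (so that $H^i(\bP^n_k,j_!(-))=H^i_c(\mathbb{A}^n,-)$) yields the exact sequence
\[ \cdots\to H^i_c(\mathbb{A}^n,\mathscr{K}|_{\mathbb{A}^n})\to H^i(U,\sF)\to H^i(H,\mathscr{K}|_H)\to H^{i+1}_c(\mathbb{A}^n,\mathscr{K}|_{\mathbb{A}^n})\to\cdots . \]

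Next I would identify the group $H^i(H,\mathscr{K}|_H)$. Proper base change along the finite morphism $\pi$ gives $\mathscr{K}|_H\cong R\pi_{H*}\bigl((R\alpha_*\sF)|_{\pi^{-1}(H)}\bigr)$, where $\pi_H$ is the (finite) restriction of $\pi$ over $H$. There is a dense open $\cU\subseteq\cP$ such that for $H\in\cU$ the hyperplane section $\pi^{-1}(H)\subseteq\overline U$ is non-characteristic for $R\alpha_*\sF$ — transverse to a stratification of $\overline U$ on whose strata $R\alpha_*\sF$ is lisse, in particular to the strata of the boundary $Z\colonequals\overline U\setminus U$ — so that the base-change map $(R\alpha_*\sF)|_{\pi^{-1}(H)}\to R\alpha_{H*}\bigl(\sF|_{f^{-1}(H)}\bigr)$ is an isomorphism; a generic hyperplane works by a Bertini-type argument, which is where the infinitude of $k$ enters. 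For such $H$ this gives $H^i(H,\mathscr{K}|_H)\iso H^i(f^{-1}(H),\sF|_{f^{-1}(H)})$, and a routine functoriality check identifies the restriction map of the theorem with the composite of $H^i(U,\sF)\to H^i(H,\mathscr{K}|_H)$ with this isomorphism. Shrinking $\cU$ we may also assume $H\not\supseteq\overline{f(U)}$, so that $V\colonequals f^{-1}(\mathbb{A}^n)$ is dense in $U$.

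It remains to show $H^i_c(\mathbb{A}^n,\mathscr{K}|_{\mathbb{A}^n})=0$ for $i<d$. Since $R\alpha_*$ commutes with restriction to the open subscheme $\overline V\colonequals\pi^{-1}(\mathbb{A}^n)$ of $\overline U$, one has $\mathscr{K}|_{\mathbb{A}^n}=R\pi_{0*}R\alpha_{0*}(\sF|_V)$, where $\pi_0\colon\overline V\to\mathbb{A}^n$ is the (finite) restriction of $\pi$ — so $\overline V$ is affine — and $\alpha_0\colon V\inj\overline V$ is an open immersion. Now $V$, being open in $U$, is smooth over $k$ and, by the reduction just made, of pure dimension $d$; hence the lisse sheaf $\sF|_V$ has $\sF|_V[d]$ perverse, so $\sF|_V$ lies in perverse degrees $\geq d$. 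As $R\alpha_{0*}$ is left $t$-exact and $R\pi_{0*}$ is $t$-exact for the perverse $t$-structures ($\pi_0$ being finite), $\mathscr{K}|_{\mathbb{A}^n}$ lies in perverse degrees $\geq d$; and since $\mathbb{A}^n$ is affine, $R\Gamma_c(\mathbb{A}^n,-)$ is left $t$-exact for the perverse $t$-structure — the compact-support incarnation of Artin's affine Lefschetz theorem — so $R\Gamma_c(\mathbb{A}^n,\mathscr{K}|_{\mathbb{A}^n})$ is concentrated in degrees $\geq d$. Inserting this vanishing into the exact sequence of the first paragraph shows that $H^i(U,\sF)\to H^i(H,\mathscr{K}|_H)$, and hence the restriction map of the theorem, is an isomorphism for $i<d-1$ and injective for $i=d-1$.

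I expect the main obstacle to be the base-change step of the second paragraph: one must arrange $\cU$ so that the generic hyperplane section is non-characteristic for $R\alpha_*\sF$, i.e. so that its cohomology really is computed by $\mathscr{K}|_H$. In characteristic $0$ this is standard transversality for a Whitney stratification; in general one invokes Deligne's generic base change theorem (together, in positive characteristic, with the finiteness of singular supports, after Beilinson and Saito) and Bertini over the infinite field $k$. If, as in the intended application (Theorem \ref{spLNthm}), one also wants $f^{-1}(H)$ smooth and geometrically connected, one simply shrinks $\cU$ further using Bertini and its irreducibility refinements. Everything else is formal, together with the $t$-exactness properties of $R\alpha_{0*}$, $R\pi_{0*}$ and $R\Gamma_c(\mathbb{A}^n,-)$.
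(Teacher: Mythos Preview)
The paper does not prove this theorem: its entire argument is to invoke \cite[Corollary~A.5]{Katz93AffinePerveMonodromy}, note that for $U$ smooth of pure dimension $d$ and $\sF$ lisse the shift $\sF[d]$ is perverse (so Katz's hypothesis is met), and observe that the separably closed case reduces to the algebraically closed one. Your proposal instead reconstructs an actual proof --- push $\sF$ forward to $\bP^n$ via Zariski's Main Theorem, use the recollement triangle for $H\inj\bP^n\hookleftarrow\mathbb{A}^n$, and kill the compactly supported term on $\mathbb{A}^n$ by the perverse form of Artin's affine vanishing --- which is essentially the argument underlying the citation. The perverse bookkeeping in your third paragraph is correct ($\sF|_V$ in perverse degree $d$, $R\alpha_{0*}$ left $t$-exact, $R\pi_{0*}$ $t$-exact, $R\Gamma_c$ of an affine left $t$-exact), and you rightly isolate the base-change identification $\mathscr{K}|_H\simeq R(f_H)_*(\sF|_{f^{-1}(H)})$ as the only substantive step; for this, Deligne's generic base change applied over $\cP$ via the universal hyperplane already suffices, and the singular-support machinery of Beilinson--Saito that you mention is unnecessary. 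The paper's route buys brevity by outsourcing everything to Katz; yours buys self-containment, at the price of reproducing what is in Katz's appendix anyway.
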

\begin{proof}
    In fact, in loc. cit., this theorem is proven when $k$ is algebraically closed for general perverse sheaves without assuming that $U$ is smooth and is of pure dimension $d$. In our case $\sF[d]$ is a perverse sheaf; see \cite[p. 139]{KW01Weil}. Moreover, the algebraically closed case immediately implies the separably closed case.
\end{proof}

\section{Proof of Theorem \ref{spLNthm}}\label{s3}
Choose a dense open subset $U$ of $B$ and an abelian scheme $\sA$ over $U$ such that $A \simeq \sA_K$ (see \cite[Remark 20.9]{Milne86AV}). 
Applying Bertini's theorem \cite[Corollary 6.11(2)]{Jouanolou83Bertini} to $B$ and $U$, we get a dense open subset \(\cU_1\) of the dual projective space $\cP$ of $\bP^n_k$ such that if $H$ lies in \(\cU_1(k)\) then $B \cap H$ (hence $U \cap H$) is smooth and geometrically connected of dimension $d-1$, and $U \cap H\neq \emptyset$. In particular, $A$ has good reduction at $B \cap H$ if $H \in \cU_1(k)$. 

Let $\ell$ be a prime different from $p$. Then by \cite[\S 7.3, Lemma 2]{BLR90Neron}, the kernel ${}_{\ell^m}\sA$ of multiplication by $\ell^m$ on $\sA$ is finite and \'etale. Thus it represents a locally constant constructible \'etale sheaf on $U$. Denote by $T_\ell\sA$ the lisse $\ell$-adic sheaf $({}_{\ell^m}\sA)$. 
    
Let $k_s$ be a separable closure of $k$. We denote base change from $k$ to $k_s$ by an index $s$. Note that the immersion $f \colon U \hookrightarrow \bP_k^n$ induced by the projective embedding $B \hookrightarrow \bP_k^n$ is separated quasi-finite.  By Theorem~\ref{DeligneWeakLef}, there exists a dense open subset \(\cU_2\) of the dual projective space $\cP_s$ such that if $H$ lies in \(\cU_2\), then the restriction map
\[ H^i(U_s,T_\ell\sA) \otimes_{\bZ_\ell} \overline{\bQ_\ell} \longrightarrow H^i(U_s \cap H, T_\ell\sA) \otimes_{\bZ_\ell} \overline{\bQ_\ell}\]
is an isomorphism for $i< d-1$ and injective for $i=d-1$. Therefore the restriction map
\[ H^i(U_s,T_\ell\sA) \longrightarrow H^i(U_s \cap H, T_\ell\sA)\]
has finite kernel and cokernel  for $i< d-1$ and finite kernel for $i=d-1$. (Recall that $H^i_\et(U_s,{}_{\ell^m}\sA) $ is finite for all $m$ by \cite[Th. finitude]{SGA4.5}, hence $H^i(U_s,T_\ell\sA) $ is a finitely generated $\bZ_\ell$-module.)

The open subset $\cU_2$ is defined over a finite Galois extension of $k$; taking the intersection of its conjugates, we may assume that it is defined over $k$. Take \(\cU = \cU_1 \cap\cU_2\). 

We now proceed in three steps:
    
\subsection{$\Ker \psi$ is finite}   \label{finiteKer}
    For $H\in {\cU(k)}$, we write $h=B\cap H$. Since the groups $C(k_s)$ and $C_h(k_s)$ are $\ell$-divisible, we have the isomorphisms
    \[ \sA(U_s)\big/\ell^m \simeq \left(\sA(U_s)/C(k_s)\right) \otimes_\bZ \bZ/\ell^m\bZ \simeq \LN(A,Kk_s/k_s)\big/\ell^m,\]
    where the second one holds by Lemma~\ref{ExtendRatPt}. Similarly, we have such isomorphisms for $\LN(A_h,Ek_s/k_s)$.
    Taking the inverse limit of the following commutative diagrams
    \[\xymatrix{
        H^1_\et(U_s,{}_{\ell^m}\sA)       \ar[d] & \sA(U_s)\big/\ell^m        \ar@{_(->}[l] \ar[r]^-\sim \ar[d]  & \LN(A,Kk_s/k_s)\big/\ell^m   \ar[d]\\
        H^1_\et((U\cap h)_s,{}_{\ell^m}\sA)        & \sA((U\cap h)_s)\big/\ell^m  \ar@{_(->}[l] \ar[r]^-\sim         & \LN(A_h,Ek_s/k_s)\big/\ell^m
    }\]
    we get the commutative diagram
    \[\xymatrix{
        H^1(U_s,T_\ell\sA)      \ar[d] & \sA(U_s)^{\wedge}      \ar@{_(->}[l] \ar[r]^-\sim \ar[d] & \LN(A,Kk_s/k_s)^{\wedge}  \ar[d]^{\psi^{\wedge}} \\
        H^1((U\cap h)_s,T_\ell\sA)      & \sA((U\cap h)_s)^{\wedge} \ar@{_(->}[l] \ar[r]^-\sim        & \LN(A_h,Ek_s/k_s)^{\wedge}
    }\]
    where $(-)^{\wedge}$ denotes $\ell$-adic completion. Since the left vertical arrow has finite kernel, so do the others. By the Lang-N\'eron theorem, the abelian group $\LN(A,K/k)$ is finitely generated. Thus $\psi^{\wedge}=\psi\otimes\bZ_\ell$, which implies that $\psi$ has a finite kernel. But $\LN(A,K/k)$ injects into $\LN(A,Kk_s/k_s)$, so we are done.

\subsection{The map $\varphi_0$ is an isogeny}\label{spTrIsogeny}
    Given an abelian group $M$, we have the $\ell$-adic Tate module of $M$ defined by
    \[ T_\ell M = \varprojlim_m ({}_{\ell^m}M), \]
    where ${}_{\ell^m}M$ is the $\ell^m$-torsion of $M$ and the inverse limit is over positive integers $m$ with transition morphisms given by the multiplication-by-$\ell$ map ${}_{\ell^{m+1}}M \to {}_{\ell^m}M$. The Tate modules of the Lang-N\'eron groups vanish because the Lang-N\'eron groups are finitely generated and the transition map is multiplication by $\ell$. Consider the commutative diagram
    \[\xymatrix{
        T_\ell(C(k_s))   \ar[r] \ar[d]^{T_\ell(\varphi_0)} & T_\ell(A(Kk_s))   \ar[d]^{T_\ell(\varphi)} & H^0(U_s,T_\ell \sA) \ar[d]\ar[l] \\
        T_\ell(C_h(k_s)) \ar[r]                            & T_\ell(A_h(Ek_s))                        & H^0((U\cap h)_s, T_\ell \sA). \ar[l]  
    }\]

    Applying  the (left exact) Tate module functor to Diagram \eqref{sp}, we see that the left horizontal maps are isomorphisms;  Lemma \ref{ExtendRatPt} then shows the same for the right horizontal maps.
    
    Applying Theorem \ref{DeligneWeakLef} to the right vertical arrow, we obtain that all vertical arrows have finite kernels and cokernels. By \cite[Remark 8.4]{Milne86AV}, $T_\ell(C(k_s))$ (resp. $T_\ell(C_h(k_s))$) is a free $\bZ_\ell$-module whose rank is $2\dim C$ (resp. $2\dim C_h$). It follows that the left vertical arrow is injective, and that $C$ and $C_h$ have the same dimension. The injectivity of $T_\ell(\varphi_0)$ implies that the $\ell$-adic Tate module of the abelian variety ${(\ker \varphi_0)}_{\rm red}^0$ vanishes. Thus ${(\ker \varphi_0)}_{\rm red}^0$ is a $0$-dimensional abelian variety and $\ker\varphi_0$ is a finite group scheme. Hence $\varphi_0$ is an isogeny.   

\subsection{End of proof}\label{pGrp}
    So far, we have proven that the kernels of the vertical maps in \eqref{sp} are finite, and it remains to show that they are $p$-primary. Since the formation of $K/k$-trace commutes with base change \cite[Th. 6.8]{Conrad06ChowLN} and $\LN(A,K/k)$ injects into $\LN(A,Kk_s/k_s)$ (ibid., proof of Lemma 7.3), we reduce to the case $k=k_s$. But then,  the map $\varphi \colon A(K) \to A_h(E)$ is injective on $n$-torsion for any $n$ invertible in $k$ \cite[p. 153]{Serre97MordellWeil} and $C_h(k)$ is $n$-divisible for such $n$ \cite[Th. 8.2]{Milne86AV}. Thus the conclusion follows by applying the snake lemma to  Diagram \eqref{sp}.

\subsection{Remark}\label{rmk}
    Let $x$ be a closed point of $B$, let $A_x$ be the special fiber of $\sA$ at $x$, and let $R_x$ be the Weil restriction of $A_x$ through the finite extension $k(x)/k$. The map $C_U \to \sA$ induces a map $C_{k(x)} \to A_x$ and then induces a map $C \to R_x$ by functoriality. Consider the following commutative diagram with exact rows
    \[\xymatrix{
        0 \ar[r] & C(k)   \ar[r] \ar[d]^{\varphi_0} & A(K)      \ar[r] \ar[d]^{\varphi} & \LN(A,K/k)   \ar[r] \ar[d]^{\psi} &0\\
        0 \ar[r] & R_x(k) \ar[r]^-{\sim}            & A_x(k(x)) \ar[r]                  & 0            \ar[r]               &0.
    }\]
    
    The snake lemma gives us an exact sequence 
    \begin{equation*}
        0 \to \Ker\varphi_0 \to \Ker\varphi \to \LN(A,K/k) \to A_x(k(x))/C(k)
    \end{equation*}
    
    A similar argument to \S \ref{pGrp} shows that $\Ker\varphi_0$ is a finite $p$-group. Thus $\Ker \varphi$ is finitely generated and its rank is uniformly bounded when $x$ varies.


\end{document}